\documentclass[12pt]{amsart}
\usepackage{amssymb,amsmath,amsthm,mathtools,amsfonts,times,hyperref,mathrsfs,multirow}
\usepackage{enumerate}
\usepackage{graphicx}
\usepackage{array}
\usepackage{ytableau}
\usepackage{pstricks,pst-node,graphicx}
\usepackage{tikz,varwidth}
\usepackage{setspace}
\usepackage{float}
\usetikzlibrary{calc}
\usepackage{tikz}
\usepackage{extarrows}

\setlength{\textheight}{220mm} \setlength{\textwidth}{155mm}
\setlength{\oddsidemargin}{1.25mm}
\setlength{\evensidemargin}{1.25mm} \setlength{\topmargin}{0mm}

\newtheorem{theorem}{Theorem}[section]
\newtheorem{lemma}[theorem]{Lemma}

\theoremstyle{definition}
\newtheorem{definition}[theorem]{Definition}
\newtheorem{example}[theorem]{Example}

\theoremstyle{remark}
\newtheorem{remark}{Remark}
\numberwithin{equation}{section}

\allowdisplaybreaks

\newcommand\numberthis{\addtocounter{equation}{1}\tag{\theequation}}
\newcommand\restr[2]{{
  \left.\kern-\nulldelimiterspace 
  #1 
  \littletaller 
  \right|_{#2} 
  }}
\newcommand{\littletaller}{\mathchoice{\vphantom{\big|}}{}{}{}}  

\begin{document}

\title[]
{A bijective proof of an identity of Berkovich and Uncu}

\author{Aritram Dhar}
\address{Department of Mathematics, University of Florida, Gainesville
FL 32611, USA}
\email{aritramdhar@ufl.edu}
\author{Avi Mukhopadhyay}
\address{Department of Mathematics, University of Florida, Gainesville
FL 32611, USA}
\email{mukhopadhyay.avi@ufl.edu}

\date{\today}

\subjclass[2020]{05A15, 05A17, 05A19, 11P81, 11P83, 11P84}             

\keywords{BG-rank, strict partition, bijection, generating function}

\begin{abstract}
The BG-rank BG($\pi$) of an integer partition $\pi$ is defined as $$\text{BG}(\pi) := i-j$$ where $i$ is the number of odd-indexed odd parts and $j$ is the number of even-indexed odd parts of $\pi$. In a recent work, Fu and Tang ask for a direct combinatorial proof of the following identity of Berkovich and Uncu $$B_{2N+\nu}(k,q)=q^{2k^2-k}\left[\begin{matrix}2N+\nu\\N+k\end{matrix}\right]_{q^2}$$ for any integer $k$ and non-negative integer $N$ where $\nu\in \{0,1\}$, $B_N(k,q)$ is the generating function for partitions into distinct parts less than or equal to $N$ with BG-rank equal to $k$ and $\left[\begin{matrix}a+b\\b\end{matrix}\right]_q$ is a Gaussian binomial coefficient. In this paper, we provide a bijective proof of Berkovich and Uncu's identity along the lines of Vandervelde and Fu and Tang's idea.
\end{abstract}
\maketitle

\section{Introduction}\label{s1}
An integer partition is a non-increasing finite sequence $\pi = (\lambda_1,\lambda_2,\ldots)$ of non-negative integers where $\lambda_i$'s are called the parts of $\pi$. We denote the number of parts of $\pi$ by $\#(\pi)$ and the largest part of $\pi$ by $l(\pi)$. The size of $\pi$ is the sum of the parts of $\pi$ and is denoted by $|\pi|$. We say that $\pi$ is a partition of $n$ if $|\pi| = n$. $\lambda_{2i-1}$ (resp. $\lambda_{2i}$) are called odd-indexed (resp. even-indexed) parts of $\pi$.\\\par In \cite{Ber-Gar06} and \cite{Ber-Gar08}, Berkovich and Garvan defined the BG-rank of a partition $\pi$, denoted by $BG(\pi)$, as 
\begin{align*}
    BG(\pi) := i-j,
\end{align*}
where $i$ is the number of odd-indexed odd parts and $j$ is the number of even-indexed odd parts. The BG-rank of a partition $\pi$ can also be represented by $2$-residue Ferrers diagram of $\pi$. The $2$-residue Ferrers diagram of a partition $\pi$ is represented by writing the ordinary Ferrers diagram with boxes instead of dots and filling the boxes using alternate $0$’s and $1$’s starting from $0$ on odd-indexed parts and $1$ on even-indexed parts. In \cite{Ber-Gar08}, Berkovich and Garvan showed that
\begin{align*}
    BG(\pi) = r_0-r_1,
\end{align*}
where $r_0$ (resp. $r_1$) is the number of $0$'s (resp. $1$'s) in the $2$-residue Ferrers diagram of $\pi$. For example, Figure \ref{fig1} below depicts the $2$-residue Ferrers diagram for the partition $\pi = (10,7,4,2)$ and so, $BG(\pi) = r_0-r_1 = 11-12 = -1$.
\begin{figure}[H]
\centering
  \ytableaushort
      {{0}{1}{0}{1}{0}{1}{0}{1}{0}{1},{1}{0}{1}{0}{1}{0}{1},{0}{1}{0}{1},{1}{0}}
      * {10,7,4,2}
  \caption{$2$-residue Ferrers diagram of the partition $\pi = (10,7,4,2)$}
  \label{fig1}
\end{figure}
Let $L,m,n$ be non-negative integers. We now recall some notations from the theory of $q$-series that can be found in \cite{And98}.
\begin{align*}
    (a)_L = (a;q)_L &:= \prod_{k=0}^{L-1}(1-aq^k),\\
    (a)_{\infty} = (a;q)_{\infty} &:= \lim_{L\rightarrow \infty}(a)_L\,\,\text{where}\,\,\lvert q\rvert<1.
\end{align*}
We define the $q$-binomial (Gaussian) coefficient as
\begin{align*}
    \left[\begin{matrix}m\\n\end{matrix}\right]_q := \Bigg\{\begin{array}{lr}
        \dfrac{(q)_m}{(q)_n(q)_{m-n}}\quad\text{for } m\ge n\ge 0,\\
        0\qquad\qquad\quad\text{otherwise}.\end{array}
\end{align*}
\begin{remark}\label{rmk1}
For $m, n\ge 0$, $\left[\begin{matrix}m+n\\n\end{matrix}\right]_q$ is the generating function for partitions into at most $n$ parts each of size at most $m$ (see \cite[Chapter $3$]{And98}). Also, note that $$\lim_{m\longrightarrow\infty}\left[\begin{matrix}m+n\\n\end{matrix}\right]_q = \dfrac{1}{(q)_n}.$$
\end{remark}
\par We call a partition into distinct parts a \textit{strict} partition. If $B_N(k,q)$ denotes the generating function for the number of strict partitions into parts less than or equal to $N$ with BG-rank equal to $k$, then Berkovich and Uncu \cite{Ber-Unc16} showed that for any non-negative integer $N$ and any integer $k$,
\begin{align}\label{eq1}
    B_{2N+\nu}(k,q) = q^{2k^2-k}\left[\begin{matrix}2N+\nu\\N+k\end{matrix}\right]_{q^2},
\end{align}
where $\nu\in\{0,1\}$. Letting $N\rightarrow \infty$ in (\ref{eq1}), we have
\begin{align}\label{eq2}
    \sum_{n=0}^{\infty}p_k^d(n)q^n = \dfrac{q^{2k^2-k}}{(q^2;q^2)_{\infty}},
\end{align}
where $p_k^d(n)$ denotes the number of strict partitions of $n$ with BG-rank equal to $k$. Note that (\ref{eq2}) is exactly Conjecture 1 in \cite{Van10} where Vandervelde defined a partition statistic called characteristic, denoted by $\chi(\pi)$, which is related to BG-rank as
\begin{align*}
    \chi(\pi) = -BG(\pi).
\end{align*}
In \cite[Remark $3.9$]{Fu-Tang20}, Fu and Tang mention that (\ref{eq2}) can also be derived from the work of Boulet \cite{Bou06}. Setting $a=d=qz$, $b=c=q/z$, and $z=1$ in Corollary 2 in \cite{Bou06}, we get (\ref{eq2}). In \cite{Van10}, Vandervelde provided a bijective proof of (\ref{eq2}) with $k = 0$. More precisely, \cite[Theorem $1$]{Van10} states
\begin{align}\label{eq3}
    \sum_{n=0}^{\infty}p_0^d(n)q^n = \dfrac{1}{(q^2;q^2)_{\infty}}.
\end{align}
Building upon Vandervelde's bijection, Fu and Tang \cite{Fu-Tang20} provided a bijective proof of (\ref{eq2}) for all integers $k$ using certain unimodal sequences whose alternating sum equals zero. In their paper \cite[Remark $3.9$]{Fu-Tang20}, Fu and Tang ask for a direct combinatorial proof of (\ref{eq1}). The main aim of this paper is to provide such a combinatorial (bijective) proof.\\\par The rest of the paper is organized as follows. In Section \ref{s2}, we present Fu and Tang's bijection. In Section \ref{s3}, we present the proof of Berkovich and Uncu's identity (\ref{eq1}). In Section \ref{s4}, we provide some examples to illustrate the bijective proof of (\ref{eq1}). We conclude with a few remarks in Section \ref{s5} to motivate further investigation.\\

\section{Fu and Tang's bijection}\label{s2}
\subsection{$(a,b)$-sequences}\label{s21}\quad\\\par
First, we will define a certain type of unimodal sequence called an $(a,b)$-sequence introduced by Fu and Tang \cite[Definition $2.1$]{Fu-Tang20}.
\begin{definition}
    For some non-negative integer $a$ and an integer $1\leq b\leq l$, we call a sequence of $l$ positive integers $\{d_1,\ldots,d_l\}$ an $(a,b)$-\textit{sequence of length} $l$ if the following conditions hold:
    \begin{enumerate}
        \item $d_{i} = a+i$ for $1\leq i\leq b$,
        \item $d_{i}$ forms a non-increasing sequence of positive integers for $i\geq b$, and
        \item $\sum\limits_{i=1}^{l} (-1)^{i}d_{i}=0$.\\
    \end{enumerate}
\end{definition} We denote the collection of all such sequences by $\mathcal{S}_{a,b}$ and define $\mathcal{S} := (\bigcup_{a\geq 0, b\geq 1} \mathcal{S}_{a,b})\cup\{\varepsilon\}$ where $\varepsilon$ is the empty sequence. For $\Delta = \{d_1,\ldots,d_l\}$, we denote $l(\Delta) = l$, $|\Delta| = \sum\limits_{i=1}^ld_i$, and $|\Delta|_{\text{alt}} = \sum\limits_{i=1}^l(-1)^id_i$. If $\Delta\in\mathcal{S}_{a,b}$, we denote $a(\Delta) = a$ and $b(\Delta) = b$.\\
\begin{example}
    $\{5,6,7,8,3,3,2,2,2,1,1\}$ is a $(4,4)$-sequence of length $11$.\\    
\end{example}

\subsection{The Bijection $\phi_a$}\label{s22}\quad\\\par
According to Chu \cite{Chu93}, a $k$-Durfee rectangle for the Young diagram of a partition is an $i\times (i+k)$ rectangle (having $i$ rows and $i+k$ columns) which is obtained by choosing the largest possible $i$ such that the $i\times (i+k)$ rectangle is contained in the Young diagram for a fixed integer $k$. It is to be noted that Fu and Tang \cite{Fu-Tang20} mention that this notion of Durfee rectangle is different from the generalization by Andrews in \cite{And71}.\\\par For integers $a \geq 0$ and $b \geq 1$, we consider a map $\phi_a : \mathcal{S}_{a,b}\to\mathcal{P}_{a,b}$ where $\mathcal{P}_{a,b}$ is the set of all integer partitions $\lambda = (\lambda_{1},\lambda_{2},\ldots)$ whose $a$-Durfee rectangle has size $\lceil\frac{b}{2}\rceil\times(\lceil\frac{b}{2}\rceil + a)$ and $\lambda_{\frac{b}{2}} > a + b/2 $ if $b$ is even or $\lambda_{\frac{b+1}{2}} = a + (b+1)/2$ if $b$ is odd.\\\par Now, we will define the map $\phi_a$. Consider a sequence $\Delta = \{d_1,d_2,\ldots,d_l\}\in \mathcal{S}_{a,b}$. The aim is to use the sequence $\Delta\in\mathcal{S}_{a,b}$ to \textit{double cover} the block diagram configuration shown in Figure \ref{fig2} below. 
\begin{figure}[H]
\centering
  \includegraphics[width=.4\linewidth]{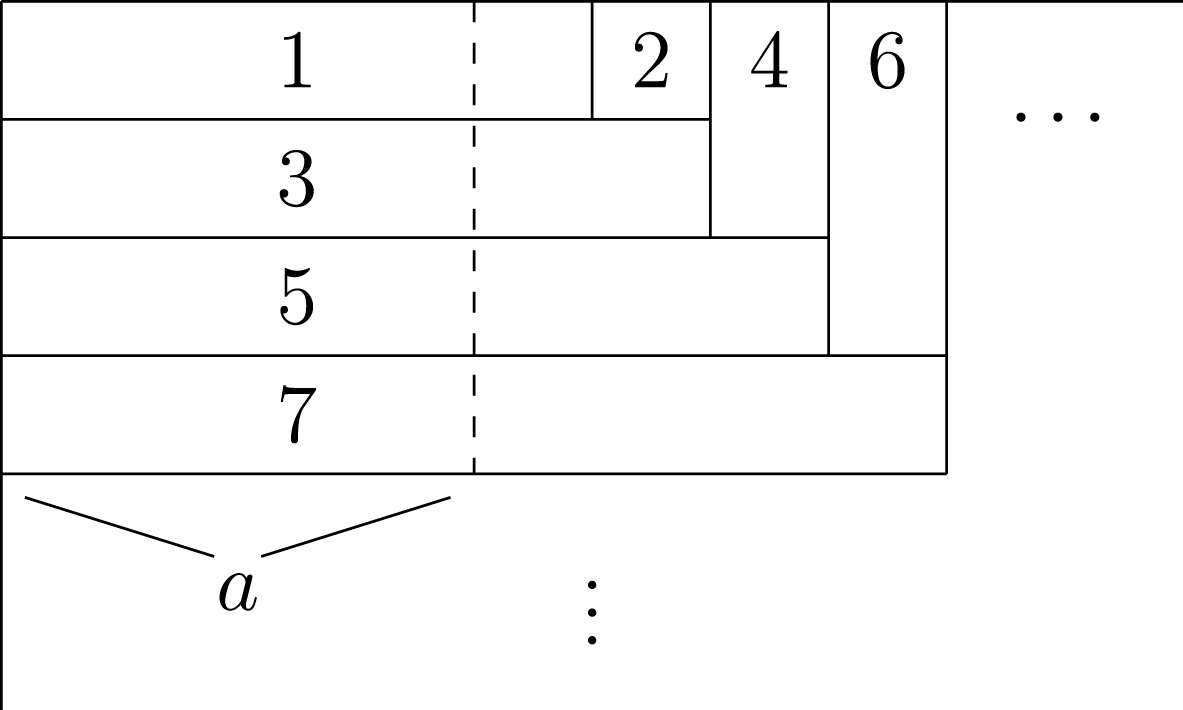}
  \caption{Block diagram configuration for $\phi_a$ with labeled blocks}
  \label{fig2}
\end{figure}\par Following Fu and Tang \cite[Fig. $2$]{Fu-Tang20}, in the block diagram configuration (see Figure \ref{fig2} above), we call the $i$th labeled block $B_i$. $B_i$ has size $1\times \left(a + \dfrac{i+1}{2}\right)$ (resp. $\dfrac{i}{2}\times 1$) if $i$ is odd (resp. even). We denote the area of $B_i$, i.e., the number of cells, by $n_i$. So, $n_1 = a+1,\,n_2 = 1,\,n_3 = a+2,\,n_4 = 2$, and so on. We obtain $\phi_{a}(\Delta)$ by performing the following operations:\\
\begin{enumerate}
    \item Fill up $B_1$ in the block diagram Figure \ref{fig2} with $d_1 = a+1$ cells which is equivalent to labeling the $a+1$ cells in $B_1$ with `$1$'.\\
    \item Use $d_i$ cells first to \textit{double cover} the already existing cells in $B_{i-1}$ for $2\leq i\leq l$ and then use the remaining cells to fill $B_i$. This is equivalent to using $d_i$ cells to re-label the already existing $n_{i-1}$ cells in $B_{i-1}$  by `$2$' first for $2\leq i\leq l$ and then labeling the remaining $d_i-n_{i-1}$ cells by `$1$' to fill $B_i$.\\ 
    \item Filling of $B_i$'s (labeling by `$1$' and re-labeling by `$2$') are done from left to right if $i$ is odd and from top to bottom if $i$ is even.\\
    \item After having used up all the $d_i$'s where $1\leq i\leq l$, the \textit{doubly covered} cells (cells which are labeled by `$2$') form the Young diagram of a partition (say) $\lambda = \phi_a(\Delta)$.\\
\end{enumerate}
\par The notion of double covering of the cells in the block diagram configuration is equivalent to coloring the cells by yellow (or labeling them by a `$1$') and then re-coloring the cells by green (or re-labeling them again by a `$2$') so that in the end, all the cells are colored in green (or labeled by `$2$'). This is exactly the reason why the base in the $q$-binomial coefficient in (\ref{eq1}) is $q^2$ instead of just $q$ as we are counting the cells twice. We then call a block diagram \textit{doubly covered} when all the cells are colored green. The doubly covered block diagram will then be the Young diagram of a partition in $\mathcal{P}_{a,b}$. From now onwards, $b_i$ will denote the number of doubly covered cells (colored green) labeled by $\mathcal{B}_i$.\\
\begin{example}
    Suppose $a = 3$, $b = 2$, and $\Delta = \{4,5,2,1\}\in \mathcal{S}_{3,2}$. Then following steps ($1$) to ($4$) above, we have $\lambda = \phi_3(\Delta) = (5,1)\in\mathcal{P}_{3,2}$ and $|\lambda| = |\Delta|/2 = 6$. For an illustration, see Figure \ref{fig3} below where the intermediate steps are denoted by arrows from left to right. All \textit{singly covered} (equivalent to being labeled by `$1$' or counted once) cells are colored yellow and all \textit{doubly covered} (equivalent to being labeled by `$2$' or counted twice) cells are colored green. All the cells labeled $\mathcal{B}_i$ form a sub-region of the $i$th block $B_i$ and $b_i$ is the number of doubly covered cells (colored green) labeled $\mathcal{B}_i$ for $i\in\{1,2,3\}$. Here, $b_1 = 4$, $b_2 = 1$, and $b_3 = 1$ form the partition $\lambda = (5,1)$. 
\begin{figure}[H]
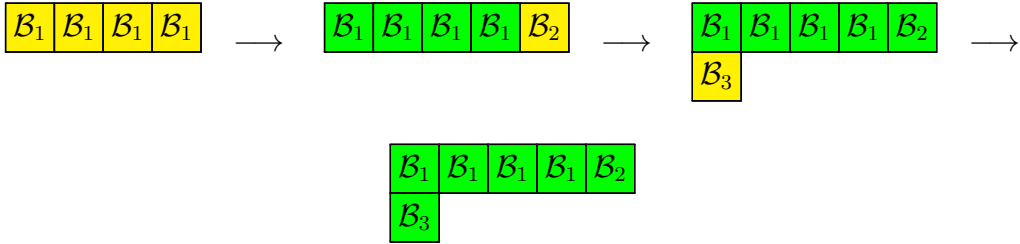

    \centering
      \ytableaushort
      {{\mathcal{B}_1}{\mathcal{B}_1}{\mathcal{B}_1}{\mathcal{B}_1}}
      * {4}
      * [*(yellow)]{4}
      \quad
      $\longrightarrow$
      \quad
      \ytableaushort
      {{\mathcal{B}_1}{\mathcal{B}_1}{\mathcal{B}_1}{\mathcal{B}_1}{\mathcal{B}_2}}
      * {5}
      * [*(green)]{4}* [*(yellow)]{1+4}
      \quad
      $\longrightarrow$
      \quad
      \ytableaushort
      {{\mathcal{B}_1}{\mathcal{B}_1}{\mathcal{B}_1}{\mathcal{B}_1}{\mathcal{B}_2},{\mathcal{B}_3}}
      * {5,1}
      * [*(green)]{5}* [*(yellow)]{0,1}
      \quad
      $\longrightarrow$
      \quad\\\quad\\\quad\\
      \ytableaushort
      {{\mathcal{B}_1}{\mathcal{B}_1}{\mathcal{B}_1}{\mathcal{B}_1}{\mathcal{B}_2},{\mathcal{B}_3}}
      * {5,1}
      * [*(green)]{5,1}
      \caption{Applying $\phi_3$ on $\Delta = \{4,5,2,1\}$ to get the partition $\lambda = (5,1)\in\mathcal{P}_{3,2}$}
      \label{fig3}
\end{figure}
\end{example}
\begin{theorem}(\cite[Theorem $2.5$]{Fu-Tang20})\label{ftbij}
    For a fixed $a\geq 0$ and any $b\geq 1$, the map $\phi_a$ defined above is a bijection from $\mathcal{S}_{a,b}$ to $\mathcal{P}_{a,b}$, such that $|\Delta| = 2|\phi_a(\Delta)|$, for any $\Delta\in\mathcal{S}_{a,b}$.\\
\end{theorem}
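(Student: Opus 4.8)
\noindent\emph{Proof strategy.} The plan is the standard two‑step argument for establishing a bijection: first show that $\phi_a$ is well defined, i.e.\ that $\phi_a(\Delta)$ is genuinely a partition lying in $\mathcal{P}_{a,b}$ and that $|\Delta|=2|\phi_a(\Delta)|$; then construct an explicit inverse $\psi_a\colon\mathcal{P}_{a,b}\to\mathcal{S}_{a,b}$ by undoing the double covering and check that $\psi_a\circ\phi_a$ and $\phi_a\circ\psi_a$ are the identity maps.

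For the first step I would begin with the bookkeeping of the covering process. Let $f_i$ denote the number of cells that receive the label ``$1$'' inside $B_i$ at step~$i$. Item~(2) of the construction gives $f_1=d_1$ and $f_i=d_i-f_{i-1}$ for $2\le i\le l$, whence $f_i=\sum_{j=1}^i(-1)^{i-j}d_j$; in particular condition~(3) in the definition of an $(a,b)$-sequence is exactly the statement $f_l=0$, so the process terminates with no singly covered cells, every cell of $B_i$ for $1\le i\le l-1$ ends up doubly covered, and the number of green cells in $B_i$ equals $b_i=f_i$. Telescoping $d_1=f_1$ and $d_i=f_{i-1}+f_i$ ($i\ge 2$) then gives $|\Delta|=2\sum_{i=1}^{l-1}f_i=2|\lambda|$. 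From $d_i=a+i$ for $i\le b$ one computes $f_i=n_i$ for $i\le b$, so $B_1,\dots,B_b$ are filled completely; it then remains to verify, using condition~(2), that $0\le f_i\le n_i$ for $i>b$ and that the green cells assemble into the Young diagram of a partition $\lambda$. Finally, reading this diagram against the block configuration of Figure~\ref{fig2} shows that its $a$-Durfee rectangle is exactly $\lceil b/2\rceil\times(\lceil b/2\rceil+a)$ and that the parity-dependent condition on $\lambda_{b/2}$ (if $b$ is even) or $\lambda_{(b+1)/2}$ (if $b$ is odd) holds, so that $\lambda\in\mathcal{P}_{a,b}$.

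For the second step, given $\lambda\in\mathcal{P}_{a,b}$ the Durfee-rectangle hypothesis lets one superimpose $\lambda$ on the block diagram of Figure~\ref{fig2} in a unique way. Writing $b_i$ for the number of cells of $\lambda$ inside $B_i$, letting $l$ be one more than the largest index $i$ with $b_i\ne 0$, and setting $d_1=b_1$ and $d_i=b_{i-1}+b_i$ for $2\le i\le l$, one checks that $\Delta:=\{d_1,\dots,d_l\}\in\mathcal{S}_{a,b}$: the equalities $d_i=a+i$ for $i\le b$ amount to $\lambda$ filling $B_1,\dots,B_b$ completely, which is guaranteed by the size of the Durfee rectangle together with the extra condition on $\lambda_{b/2}$ or $\lambda_{(b+1)/2}$; the inequalities $d_i\ge d_{i+1}$ for $i\ge b$ are equivalent, via $d_i\ge d_{i+1}\iff b_{i-1}\ge b_{i+1}$, to $\lambda$ having weakly decreasing rows and columns beyond the Durfee rectangle; and $\sum_i(-1)^id_i=0$ telescopes from $d_i=b_{i-1}+b_i$ using the conventions $b_0=b_l=0$. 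That $\psi_a$ and $\phi_a$ are mutual inverses is then immediate, because $b_i=f_i$ and the recursions $f_i=d_i-f_{i-1}$ and $d_i=b_{i-1}+b_i$ are inverse to one another.

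I expect the genuine obstacle to lie in the geometry of the first step: proving that the doubly covered region is always a legitimate Young diagram, and that the correspondence ``the tail $d_b\ge d_{b+1}\ge\cdots$ is weakly decreasing'' $\longleftrightarrow$ ``$\lambda$ is a partition'' is exact at the seam where an arm block meets a leg block and along the boundary of the $a$-Durfee rectangle. The parity of $b$ has to be carried through in two cases throughout, and one must make sure that the $a$-Durfee rectangle of $\phi_a(\Delta)$ is \emph{exactly}, not merely approximately, $\lceil b/2\rceil\times(\lceil b/2\rceil+a)$ — this is precisely where the strict ``$>$'' versus the equality ``$=$'' in the definition of $\mathcal{P}_{a,b}$ enters.
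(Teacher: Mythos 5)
Your overall plan is the right one, and it matches what the paper offers: the paper does not actually prove Theorem \ref{ftbij} (it is imported from Fu and Tang), and the only in-house justification is Remark \ref{rmk2}, which is precisely your telescoping bookkeeping $f_1=d_1$, $d_i=f_{i-1}+f_i$, $f_l=0$, hence $|\Delta|=2|\phi_a(\Delta)|$ (your $f_i$ is the paper's $b_i$). The explicit inverse obtained by reading the $b_i$ off the block decomposition and setting $d_1=b_1$, $d_i=b_{i-1}+b_i$ is also exactly how the paper inverts the map in Section \ref{s4}, so the two steps of your strategy are the intended ones.

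The one substantive gap is the claim you defer: that $0\le f_i\le n_i$ for $i>b$ and that the green cells form a Young diagram with the prescribed $a$-Durfee rectangle. Without this the map is not even well defined (a block could overflow, or a row could be longer than the one above it), so it cannot be left as ``it remains to verify.'' It does follow from conditions (2) and (3), and the missing lemma is short: from $f_i=d_i-f_{i-1}$ one gets $f_i=d_i-d_{i-1}+f_{i-2}\le f_{i-2}$ whenever $i-1\ge b$, so the $f_i$ are non-increasing along each parity class beyond the rectangle; together with the base values $f_{b-1}=n_{b-1}$, $f_b=n_b$ this yields $f_i\le n_i$, while nonnegativity of $f_i$ comes from pairing consecutive terms of the alternating tail sum $f_i=d_{i+1}-d_{i+2}+\cdots$. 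The same inequalities $f_{2j-1}\ge f_{2j+1}$ and $f_{2j}\ge f_{2j+2}$ are exactly the weak decrease of the rows and of the columns of $\lambda$ outside the rectangle, which is what makes the green region a partition and makes your inverse-direction equivalence $d_i\ge d_{i+1}\iff b_{i-1}\ge b_{i+1}$ bite. Finally, one statement in your write-up is false as phrased: it is not the case that ``every cell of $B_i$ for $1\le i\le l-1$ ends up doubly covered'' --- only $f_i$ of the $n_i$ cells of $B_i$ are ever touched, and $f_i$ can be $0$ (see $B_6$ in Example \ref{eg2}); the correct assertion is that every cell that gets covered at all ends up doubly covered, because $f_l=0$.
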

\begin{remark}\label{rmk2}
     Since  $b_1 = a + 1 = d_1$ and $b_{i-1} + b_i = d_i$ for $2\le i\le I+1$ where $I$ is the index of the last present block in the block diagram configuration and $b_{I+1} = 0$, we have $\sum\limits_{i=1}^{I}d_i = 2\sum\limits_{i=1}^{I}b_i$ which implies $|\Delta| = 2|\phi_a(\Delta)|$ as in Theorem \ref{ftbij}. This justifies the fact that after using up all the $d_i$'s, none of the cells in the image partition $\lambda = \phi_a(\Delta)$ are \textit{singly covered}, i.e., none of the cells are colored yellow.\\
\end{remark}

\subsection{Application to strict partitions}\label{s23}\quad\\\par
First, we consider the map $\iota:\mathcal{D}\to\mathcal{T}\times\mathcal{S}$ where $\mathcal{D}$ is the set of all strict partitions and $\mathcal{T}$ is the set of all triangular numbers, i.e., $\mathcal{T} := \left\{\dfrac{n(n+1)}{2} : n\in\mathbb{Z}\right\}$. Fu and Tang \cite[Lemma $3.1$]{Fu-Tang20} proved that $\iota$ is in fact an injection.\\\par For any strict partition $\lambda = (\lambda_1,\lambda_2,\ldots,\lambda_r)\in\mathcal{D}$, we consider the shifted Young diagram of $\lambda$, which is the same as the Young diagram of $\lambda$ except that each row after the first row is indented by one box from the row above it. For instance, see the Young diagram in Figure \ref{fig4} whose cells are colored orange.
\begin{figure}[H]
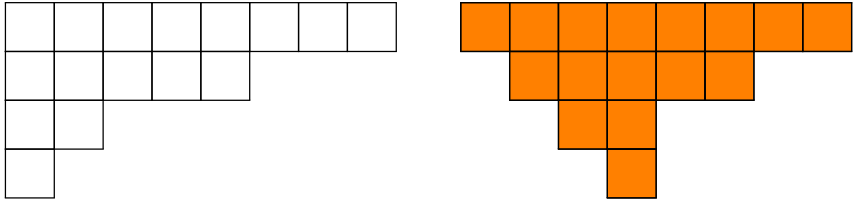

\centering
  \ytableaushort
  {\none,\none,\none,\none}
  * {8,5,2,1}\quad\quad
  \ydiagram{8,1+5,2+2,3+1}
  * [*(orange)]{8,1+5,2+2,3+1}
  \caption{Young diagram and shifted Young diagram representing the partition $\lambda = (8,5,2,1)$}
  \label{fig4}
\end{figure}
Now, construct the sequence of column lengths (read from left to right) of its shifted Young diagram. These column lengths form a unimodal sequence $c(\lambda) = \{c_1,c_2,\ldots,c_{\lambda_1}\}$. For example, for the shifted Young diagram shown in Figure \ref{fig4} above, $c(\lambda) = \{1,2,3,4,2,2,1,1\}$.\\\par Fu and Tang \cite[Lemma $3.1$]{Fu-Tang20} proved that there exists a unique integer $0\le m\le r$ (where $r$ is the number of parts of $\lambda$, i.e., $r = \#(\lambda)$) such that $\sum\limits_{i=1}^m(-1)^ic_i = \sum\limits_{i=1}^{\lambda_1}(-1)^ic_i$, i.e.,  $|\Delta|_\text{alt} = 0$ where $\Delta := \{c_{m+1},c_{m+2},\ldots,c_{\lambda_1}\}$ and so, $\Delta\in\mathcal{S}$.\\\par Now, define $\iota(\lambda) = (t,\Delta)$ where $t = 1+2+\ldots+m = \binom{m+1}{2}$. Clearly, $\iota(\lambda)\in\mathcal{T}\times \mathcal{S}$ and so, $|\lambda| = \lambda_1+\lambda_2+\ldots+\lambda_r = c_1+c_2+\ldots+c_{\lambda_1} = t+|\Delta|$. Fu and Tang \cite[Lemma $3.1$]{Fu-Tang20} proved that $(t,\Delta)\in\iota(\mathcal{D})$ if and only if any one of the following conditions hold
\begin{enumerate}
    \item $a(\Delta) = m$, or
    \item $a(\Delta)\le m-1$ and $b(\Delta) = 1$, or
    \item $\Delta = \varepsilon$.
\end{enumerate} $\iota$ is a bijection simply because the pre-image of any $(t,\Delta)\in\mathcal{T}\times\mathcal{S}$ satisfying either (1) or (2) or (3) mentioned above can be constructed uniquely by appending columns of length $1,2,\ldots,m$ to the left of the columns of length given by the elements of $\Delta$ and obtaining a shifted Young diagram.\\

\section{Bijective proof of Berkovich and Uncu's identity (\ref{eq1})}\label{s3}
We now present the statement of the main result which we prove in this section.\\
\begin{theorem}\label{combbu}
Let $\nu\in\{0,1\}$, $N$ be a non-negative integer, and $k$ be any integer. Then, for any positive integer $n$, the number of strict partitions $\pi_d$ of $n$ with BG-rank equal to $k$ and $l(\pi_d)\le 2N+\nu$ is equal to the number of partitions $\pi$ of $\frac{n-2k^2+k}{2}$ where $l(\pi)\le N+\nu-k$ and $\#(\pi)\le N+k$.\\    
\end{theorem}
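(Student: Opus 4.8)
The plan is to build the bijection in two stages, composing the machinery already set up in Section~\ref{s23} with the bijection $\phi_a$ of Theorem~\ref{ftbij}. Let $\pi_d$ be a strict partition of $n$ with $l(\pi_d)\le 2N+\nu$ and $BG(\pi_d)=k$. The first step is to apply $\iota$ to obtain $\iota(\pi_d)=(t,\Delta)\in\mathcal{T}\times\mathcal{S}$ with $t=\binom{m+1}{2}$ and $|\pi_d|=t+|\Delta|$. Here $m=\#(\pi_d')$ in the sense of the construction in Section~\ref{s23}, and the triangle of columns of lengths $1,2,\dots,m$ that gets stripped off is precisely the staircase forced by the $2$-residue structure. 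The key observation to nail down is that $t$ encodes the BG-rank: I would show that for a strict partition the stripped staircase has $m$ columns, where $m$ is determined by $BG(\pi_d)=k$ via $t=\binom{m+1}{2}=2k^2-k$, i.e.\ $m=2k-1$ when $k\ge 1$ and $m=-2k$ when $k\le 0$ (so in all cases $m\in\{2|k|-1,2|k|\}$ with $\binom{m+1}{2}=2k^2-k$). This is the arithmetic heart of the reduction: the BG-rank of a strict partition is a function of how long the initial run of ``forced'' staircase columns is, because consecutive parts of a strict partition differing by more than $1$ break the alternating-residue pattern.

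The second step is to apply $\phi_{a}$ with $a=a(\Delta)$ to $\Delta$, landing in $\mathcal{P}_{a,b}$, so that $|\phi_a(\Delta)|=|\Delta|/2$. Combining the two stages, $\pi_d\mapsto \phi_{a(\Delta)}(\Delta)=:\pi$ is a map with $|\pi|=|\Delta|/2=(n-t)/2=(n-2k^2+k)/2$, matching the size demanded in the statement. What remains is to identify the image: I must check that the constraints ``$l(\pi_d)\le 2N+\nu$'' and ``$BG(\pi_d)=k$'' translate exactly into ``$l(\pi)\le N+\nu-k$ and $\#(\pi)\le N+k$.'' The parameter $m$ (equivalently $k$ and the parity $\nu$) controls the shape of the $a$-Durfee rectangle of $\pi$ through the characterization of $\mathcal{P}_{a,b}$: the Durfee rectangle has size $\lceil b/2\rceil\times(\lceil b/2\rceil+a)$, and by Fu--Tang's Lemma~3.1 the admissible pairs $(t,\Delta)$ in the image of $\iota$ satisfy $a(\Delta)=m$ or ($a(\Delta)\le m-1$ and $b(\Delta)=1$). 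Pushing the bound $\lambda_1\le 2N+\nu$ through $\iota$ caps the number of columns of $\Delta$, and then through $\phi_a$ this becomes a simultaneous cap on the largest part and the number of parts of $\pi$.

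Concretely, I would argue as follows for, say, $k\ge 1$ (the case $k\le 0$ being symmetric). Then $m=2k-1$, $t=2k^2-k$, and the stripped staircase occupies columns $1,\dots,2k-1$, so $\Delta$ consists of columns $2k,2k+1,\dots,\lambda_1$ of the shifted diagram; the number of columns in $\Delta$ is at most $(2N+\nu)-(2k-1)=2(N-k)+\nu+1$. Since $\phi_a$ turns a sequence of length $l$ into a partition whose $a$-Durfee rectangle has $\lceil b/2\rceil$ rows and the total number of ``blocks'' is governed by $l$, the length bound becomes: $\#(\pi)+(\text{something})\le N+k$ and $l(\pi)+(\text{something})\le N+\nu-k$, where the ``something'' terms are dictated by $a$ and $b$ and cancel against the $\binom{m+1}{2}$ shift. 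I expect the cleanest way to see the final inequalities is to track the bounding rectangle: the whole construction exhibits $\pi$ as fitting inside an $(N+k)\times(N+\nu-k)$ box, which by Remark~\ref{rmk1} is exactly the combinatorial content of the Gaussian binomial $\left[\begin{matrix}2N+\nu\\N+k\end{matrix}\right]_{q^2}$, and the factor $q^{2k^2-k}$ is the size $t$ of the stripped staircase.

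**Main obstacle.** The routine parts are the size bookkeeping ($|\pi|=(n-2k^2+k)/2$ is immediate once $t=2k^2-k$ is established) and invoking Theorems~\ref{ftbij} and the bijectivity of $\iota$. The genuinely delicate step is the \emph{translation of the two bounds simultaneously}: showing that $l(\pi_d)\le 2N+\nu$ and $BG(\pi_d)=k$ together force $\pi$ into the rectangle $l(\pi)\le N+\nu-k$, $\#(\pi)\le N+k$, and conversely that every such $\pi$ arises. This requires carefully combining (i) the identification $t=\binom{m+1}{2}=2k^2-k$, which pins down $m$ and hence the parity constraint that produces the $\nu$, (ii) the admissibility conditions (1)--(3) from Fu--Tang's Lemma~3.1 on which $(t,\Delta)$ lie in $\iota(\mathcal{D})$, and (iii) the explicit block sizes $n_i=a+\frac{i+1}{2}$ (odd $i$) and $n_i=\frac i2$ (even $i$) in $\phi_a$, to see precisely how the cap on $\lambda_1=l(\pi_d)$ distributes between the row-count and the column-count of $\phi_a(\Delta)$. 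I would handle this by splitting into the cases $k\ge 1$ and $k\le 0$ and, within each, tracking the largest part and number of parts of $\pi$ through both $\iota$ and $\phi_a$ as functions of $l(\pi_d)$, $m$, $a(\Delta)$, and $b(\Delta)$, then checking the resulting linear inequalities reduce to the two stated bounds. A complementary pre-image construction (peel off the staircase, run $\phi_a^{-1}$, re-append the staircase) then gives surjectivity, completing the bijection.
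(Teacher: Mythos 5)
Your strategy is exactly the paper's: compose Fu--Tang's $\iota$ (which strips a staircase of size $t=\binom{m+1}{2}=2k^2-k$, with $m=-2k$ for $k\le 0$ and $m=2k-1$ for $k>0$) with $\phi_a$, and then translate the cap $l(\pi_d)\le 2N+\nu$ into the box bounds on $\pi$. However, the step you yourself identify as the ``genuinely delicate'' one is left entirely as a plan: you never actually derive the inequalities $l(\pi)\le N+\nu-k$ and $\#(\pi)\le N+k$, nor the converse bound $l(\pi_d)\le 2N+\nu$ for preimages. The paper does this via a small but essential lemma --- the index $I$ of the last occupied block in the $\phi_a$ block diagram satisfies $I\le l(\pi_d)-a-1$, because $\Delta$ has $l(\pi_d)-a$ entries and each entry beyond the first advances the construction by at most one block --- combined with the identities $\#(\pi)=\lceil I/2\rceil$ and $l(\pi)=a+1+E$ where $E$ is the number of even-indexed blocks present. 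Substituting $a=-2k$ or $a=2k-1$ and splitting on $\nu$ and on the parity of $I$ is what actually produces the two stated bounds and their converse; without this your argument is a correct road map but not a proof.

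One concrete point your outline gets wrong and would need to repair when executing the computation: for $k>0$ the composite $\phi_{2k-1}\circ\iota$ does \emph{not} land in the box of the theorem statement. It produces a partition with $l(\pi)\le N+k$ and $\#(\pi)\le N+\nu-k$, i.e.\ the \emph{transpose} of the asserted constraints (your displayed guesses ``$\#(\pi)+(\text{something})\le N+k$, $l(\pi)+(\text{something})\le N+\nu-k$'' for $k\ge 1$ have the two bounds attached to the wrong statistics). The paper fixes this by appending a final conjugation $\ast$ in the $k>0$ case, so the full map is $\ast\circ\chi_{+}\circ\iota$. Since conjugation is an involution preserving size, this costs nothing for the enumeration, but it must appear explicitly for the map to have the stated codomain; the two cases $k\le 0$ and $k>0$ are therefore not quite ``symmetric'' as written.
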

Note that Theorem \ref{combbu} together with the partition theoretic interpretation of $q$-binomial coefficient in Remark \ref{rmk1} implies Berkovich and Uncu's identity (\ref{eq1}). We will now provide a bijective proof of Theorem \ref{combbu}.\\
\begin{proof}
Let the set of all strict partitions $\pi_d$ of $n$ having BG-rank equal to $k$ and $l(\pi_d)\le 2N+\nu$ be denoted by $\mathcal{D}_{n,k}^{N,\nu}$, the set of all partitions $\pi$ of $n$ with $l(\pi)\le L$ and $\#(\pi)\le m$ be denoted by $\mathcal{P}_{n,L,m}$, and $T_i = i(i+1)/2$ be the $i$th triangular number for any integer $i$. Clearly, $\mathcal{D}_{n,k}^{N,\nu}\subset \mathcal{D}$ and $T_i\in \mathcal{T}$.\\\par  Consider any $\pi_d\in\mathcal{D}_{n,k}^{N,\nu}$. First, construct the shifted Young diagram of $\pi_d = (\lambda_1,\ldots,\lambda_r)$ $(\lambda_1\le 2N+\nu)$ and then form the unimodal sequence $c(\pi_d) = \{c_1,\ldots,c_{\lambda_1}\}$ where $c_i$ is the length of the $i$th column of the shifted Young diagram of $\pi_d$. There exists $0\le a\le r$ such that $\sum\limits_{i=1}^a(-1)^ic_i = \sum\limits_{i=1}^{\lambda_1}(-1)^ic_i$ and so, $\Delta := \{c_{a+1},c_{a+2},\ldots,c_{\lambda_1}\}\in\mathcal{S}_{a,b}\subset\mathcal{S}$ for some integer $b\ge 1$.\\\par 
\begin{lemma}\label{relka}
For the $\Delta\in\mathcal{S}_{a,b}$ obtained from the shifted Young diagram of $\pi_d\in\mathcal{D}_{n,k}^{N,\nu}$,
    $$a = a(\Delta) = \Bigg\{\begin{array}{lr}
        -2k\quad\quad\text{if}\,\,k\le 0,\\
        2k-1\quad\text{if}\,\,k>0.\end{array}$$
\end{lemma}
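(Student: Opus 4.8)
The claim relates the BG-rank $k$ of the strict partition $\pi_d$ to the parameter $a=a(\Delta)$ of the $(a,b)$-sequence extracted by the map $\iota$ (restricted to the relevant class of strict partitions). The natural strategy is to translate everything into the language of the shifted Young diagram and its column lengths $c(\pi_d)=\{c_1,\dots,c_{\lambda_1}\}$. First I would record the key structural facts about this sequence: the $c_i$ strictly increase for $1\le i\le r$ (where $r=\#(\pi_d)$), with $c_i=i$ for $i\le\lambda_r$ and more generally $c_i$ equal to the number of parts of $\pi_d$ that are $\ge i$ after the shift; then they are non-increasing for $i\ge r$. In particular the splitting index $a$ from $\iota$ satisfies $0\le a\le r$, and on the initial segment $c_i=i$ for $1\le i\le a$ (this is exactly why $\Delta=\{c_{a+1},\dots\}$ lands in $\mathcal S_{a,b}$: the first $b$ terms of $\Delta$ are $a+1,a+2,\dots,a+b$).

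**Main computation: BG-rank in terms of the column sequence.**
The heart of the proof is to compute $BG(\pi_d)$ directly from $c(\pi_d)$. Using the $2$-residue description $BG(\pi)=r_0-r_1$, I would observe that in the \emph{shifted} diagram the $2$-coloring of the ordinary Ferrers diagram becomes, column by column, an alternating pattern whose net contribution from column $i$ depends only on $c_i \bmod 2$ and on the parity of the indentation (i.e. on $i$). A short bookkeeping argument then shows that $r_0-r_1 = \sum_{i=1}^{\lambda_1}(-1)^{i}\epsilon_i$ for an explicit $\epsilon_i\in\{0,\pm 1\}$ determined by $c_i$; after simplification this collapses to something like $BG(\pi_d) = -\tfrac12\sum_{i=1}^{\lambda_1}(-1)^i c_i$ up to a correction, i.e. the BG-rank is essentially (half) the alternating sum of the column lengths. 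Crucially, by the defining property of the splitting index, $\sum_{i=1}^{a}(-1)^i c_i = \sum_{i=1}^{\lambda_1}(-1)^i c_i$, and on $1\le i\le a$ we have $c_i=i$, so this alternating sum equals $\sum_{i=1}^{a}(-1)^i i$, which is $a/2$ if $a$ is even and $-(a+1)/2$ if $a$ is odd. Equating this closed form with the corresponding closed form for $BG(\pi_d)=k$ yields precisely: $a=-2k$ when $k\le 0$ (the even case) and $a=2k-1$ when $k>0$ (the odd case).

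**The obstacle, and how to handle it.**
The delicate point is the exact bookkeeping in the middle step — getting the parity conventions right so that the alternating column sum genuinely equals $\pm BG(\pi_d)$ with the correct sign and no stray constant. I expect to verify this by setting up the $2$-residue filling carefully on the shifted diagram, handling separately the columns indexed $i\le r$ (where $c_i$ is ``staircase-like'', $c_i=c_{i-1}+1$ once $i>\lambda_r$, and the residue bookkeeping telescopes cleanly) and the columns $i>r$ (where $\{c_i\}$ is non-increasing and the contributions pair up); a couple of small worked instances, such as the running example $\pi_d=(8,5,2,1)$ with $c=\{1,2,3,4,2,2,1,1\}$, will pin down the signs. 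Once the identity $BG(\pi_d)=f(a)$ with $f$ the piecewise-linear function above is established, solving for $a$ in the two sign regimes of $k$ is immediate and completes the proof; one also checks consistency with the $\iota$-admissibility conditions (case $a(\Delta)=m$ or $b(\Delta)=1$ or $\Delta=\varepsilon$) so that the extracted $(a,b)$ is the one the bijection actually uses.
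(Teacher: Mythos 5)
Your overall strategy is the same as the paper's: reduce $BG(\pi_d)$ to the alternating sum of the column lengths of the shifted diagram, invoke the defining property of the splitting index together with $c_i=i$ for $1\le i\le a$ to evaluate that sum as $\sum_{i=1}^{a}(-1)^i i$, and solve for $a$ in the two parity cases. However, the step you yourself single out as the heart of the proof is misstated, and in a way that is inconsistent with the conclusion you draw from it. The contribution of column $i$ to $r_0-r_1$ is not some $\epsilon_i\in\{0,\pm1\}$, and there is no factor of $\tfrac12$ and no correction term: if one really had $BG(\pi_d)=-\tfrac12\sum_i(-1)^i c_i$, the even case would give $k=-\tfrac12\cdot\tfrac{a}{2}$, i.e.\ $a=-4k$ rather than $a=-2k$, so the "up to a correction" hedge cannot be left unresolved.

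The clean fact that closes this gap is that the diagonal shift makes every column of the shifted diagram \emph{monochromatic} in the $2$-residue filling. The ordinary cell in row $j$, column $i'$ carries residue $(j+i')\bmod 2$ (row $j$ starts with $0$ or $1$ according to the parity of $j$ and alternates), and the cell in row $j$, column $i$ of the shifted diagram is the ordinary cell $(j,\,i-j+1)$, whose residue is $(i+1)\bmod 2$, independent of $j$. Hence column $i$ contributes all $c_i$ of its cells to $r_0$ if $i$ is odd and to $r_1$ if $i$ is even, so
$$BG(\pi_d)=r_0-r_1=-\sum_{i=1}^{\lambda_1}(-1)^i c_i=-\sum_{i=1}^{a}(-1)^i c_i=-\sum_{i=1}^{a}(-1)^i\, i=(-1)^{a+1}\left\lceil\frac{a}{2}\right\rceil,$$
exactly; this is precisely the identity $k=-\lvert\{1,2,\ldots,a\}\rvert_{\mathrm{alt}}$ that the paper's proof asserts and then solves. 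With this replacement your case analysis ($a$ even forces $k=-a/2\le 0$, $a$ odd forces $k=(a+1)/2>0$) goes through verbatim. A minor secondary correction: for a strict partition with $r$ parts one has $c_i=i$ for \emph{all} $i\le r$ (since $\lambda_j\ge r-j+1$), not merely for $i\le\lambda_r$; this is what guarantees $c_i=i$ on the whole range $1\le i\le a$, since $0\le a\le r$.
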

\begin{proof}
    For any $\pi_d$ having BG-rank $k$, we have\\
    \begin{align*}
    k &= -\left|\{1,2,\ldots,a\}\right|_{\text{alt}}
    \\
    &= 1-2+3-\ldots+(-1)^{a+1}a
    \\
    &= (-1)^{a+1}\left\lceil\dfrac{a}{2}\right\rceil,
    \end{align*}\\
    where $\lceil x\rceil$ is the least integer greater than or equal to $x$. Hence, $k = -\dfrac{a}{2}$ if $a$ is even which implies $a = -2k$ if $k\le 0$ and $k = \dfrac{a+1}{2}$ if $a$ is odd which implies $a = 2k-1$ if $k > 0$.
\end{proof}\par\quad\par We will now show that for $k\le 0$, $\mathcal{D}_{n,k}^{N,\nu}$ is in bijection with $\mathcal{P}_{\frac{n-2k^2+k}{2},N+\nu-k,N+k}$ and for $k > 0$, $\mathcal{D}_{n,k}^{N,\nu}$ is in bijection with $\mathcal{P}_{\frac{n-2k^2+k}{2},N+k,N+\nu-k}$. The set $\mathcal{P}_{n,L,m}$ is in bijection with $\mathcal{P}_{n,m,L}$, where the bijection is the conjugation map which interchanges the rows and columns of a partition about the main diagonal in the Young's diagram representation of the partition. Consequently, the two sets $\mathcal{P}_{\frac{n-2k^2+k}{2},N+k,N+\nu-k}$ and $\mathcal{P}_{\frac{n-2k^2+k}{2},N+\nu-k,N+k}$ are equinumerous.\\\par\begin{itemize}
    \item \fbox{Case I: $k\le 0$}\\\par One can now easily verify that for Fu and Tang's map $\iota$, $\restr{\iota}{\mathcal{D}_{n,k}^{N,\nu}} : \mathcal{D}_{n,k}^{N,\nu}\longrightarrow\{T_{-2k}\}\times\mathcal{S}_{-2k,b}$ is a bijection where $\restr{\iota}{\mathcal{D}_{n,k}^{N,\nu}}(\pi_d) = (T_{-2k},\Delta)$ with $T_{-2k} = 2k^2-k\in\mathcal{T}$ and $\Delta\in\mathcal{S}_{-2k,b}$.\\\par Now, recall Fu and Tang's bijection $\phi_a$. Consider the map $\chi_{-} : \{T_{-2k}\}\times\mathcal{S}_{-2k,b}\longrightarrow\{T_{-2k}\}\times\mathcal{P}_{\frac{n-2k^2+k}{2},N+\nu-k,N+k}$ defined as $$\chi_{-}(T_{-2k},\Delta) := (T_{-2k},\restr{\phi_{-2k}}{\mathcal{S}_{-2k,b}}(\Delta)).$$ Therefore, we have $\chi_{-}(T_{-2k},\Delta) = (T_{-2k},\pi)$ where $\pi\in\mathcal{P}_{\frac{n-2k^2+k}{2},N+\nu-k,N+k}$. Thus, $\chi_{-}$ is a bijection.\\\par Next, consider the map $\psi_{-} : \mathcal{D}_{n,k}^{N,\nu}\longrightarrow\{T_{-2k}\}\times\mathcal{P}_{\frac{n-2k^2+k}{2},N+\nu-k,N+k}$ defined as $$\psi_{-} := \chi_{-}\circ\restr{\iota}{\mathcal{D}_{n,k}^{N,\nu}}.$$ So, for any $\pi_d\in\mathcal{D}_{n,k}^{N,\nu}$, we have $$\psi_{-}(\pi_d) := \chi_{-}\left(\restr{\iota}{\mathcal{D}_{n,k}^{N,\nu}}(\pi_d)\right) = \chi_{-}(T_{-2k},\Delta) = (T_{-2k},\pi)$$ where $\pi\in\mathcal{P}_{\frac{n-2k^2+k}{2},N+\nu-k,N+k}.$\\\par Clearly, $\psi_{-}$ is an invertible map since it is the composition of two invertible maps $\restr{\iota}{\mathcal{D}_{n,k}^{N,\nu}}$ and $\chi_{-}$ where $\psi_{-}^{-1}$ is given by $$\psi_{-}^{-1} = \left(\restr{\iota}{\mathcal{D}_{n,k}^{N,\nu}}\right)^{-1}\circ\chi_{-}^{-1}.$$
    \item \fbox{Case II: $k > 0$}\\\par Again, it can be verified that for Fu and Tang's map $\iota : \mathcal{D}\longrightarrow\mathcal{T}\times\mathcal{S}$, $\restr{\iota}{\mathcal{D}_{n,k}^{N,\nu}} : \mathcal{D}_{n,k}^{N,\nu}\longrightarrow\{T_{2k-1}\}\times\mathcal{S}_{2k-1,b}$ is a bijection where $\restr{\iota}{\mathcal{D}_{n,k}^{N,\nu}}(\pi_d) = (T_{2k-1},\Delta)$ with $T_{2k-1} = 2k^2-k\in\mathcal{T}$ and $\Delta\in\mathcal{S}_{2k-1,b}$.\\\par Now, recall Fu and Tang's bijection $\phi_a$. Analogous to $\chi_{-}$, consider the map $\chi_{+} : \{T_{2k-1}\}\times\mathcal{S}_{2k-1,b}\longrightarrow\{T_{2k-1}\}\times\mathcal{P}_{\frac{n-2k^2+k}{2},N+k,N+\nu-k}$ defined as $$\chi_{+}(T_{2k-1},\Delta) := (T_{2k-1},\restr{\phi_{2k-1}}{\mathcal{S}_{2k-1,b}^{N,\nu}}(\Delta)).$$ Therefore, we have $\chi_{+}(T_{2k-1},\Delta) = (T_{2k-1},\pi)$ where $\pi\in\mathcal{P}_{\frac{n-2k^2+k}{2},N+k,N+\nu-k}$. Thus, $\chi_{+}$ is a bijection.\\\par Next, consider the map $\psi_{+} : \mathcal{D}_{n,k}^{N,\nu}\longrightarrow\{T_{2k-1}\}\times\mathcal{P}_{\frac{n-2k^2+k}{2},N+k,N+\nu-k}$ defined as $$\psi_{+} := \chi_{+}\circ\restr{\iota}{\mathcal{D}_{n,k}^{N,\nu}}.$$ So, for any $\pi_d\in\mathcal{D}_{n,k}^{N,\nu}$, we have $$\psi_{+}(\pi_d) := \chi_{+}\left(\restr{\iota}{\mathcal{D}_{n,k}^{N,\nu}}(\pi_d)\right) = \chi_{+}(T_{2k-1},\Delta) = (T_{2k-1},\pi)$$ where $\pi\in\mathcal{P}_{\frac{n-2k^2+k}{2},N+k,N+\nu-k}.$\\\par Now, define the map $$\psi^{\ast}_{+} := \ast\circ\psi_{+} = \ast\circ\chi_{+}\circ\restr{\iota}{\mathcal{D}_{n,k}^{N,\nu}}$$ such that $$\psi^{\ast}_{+}(\pi_d) := \ast(\psi_{+}(\pi_d)) = \ast(T_{2k-1},\pi) = (T_{2k-1},\pi^{\ast})$$ where $\ast$ is the conjugation operation and $\pi^{\ast}\in\mathcal{P}_{\frac{n-2k^2+k}{2},N+\nu-k,N+k}$ is the conjugate partition of the partition $\pi\in\mathcal{P}_{\frac{n-2k^2+k}{2},N+k,N+\nu-k}$.\\\par Observe that one can first apply the conjugation operation $\ast$ on the partition $\pi^{\ast}$ to get the conjugate partition $(\pi^{\ast})^{\ast} = \pi$ (since conjugation is an involution) and then apply the inverse map $\psi_{+}^{-1}$ on $(T_{2k-1},\pi)$ to get the strict partition $\pi_d$.\\
\end{itemize} 
\par For a detailed illustration of how the forward (resp. inverse) map $\psi_{-}$ or $\psi_{+}$ (resp. $\psi_{-}^{-1}$ or $\psi_{+}^{-1}$) works, see the examples listed in Section \ref{s4}.\\\par Thus, it is clear that for any strict partition $\pi_d\in\mathcal{D}_{n,k}^{N,\nu}$ of size $|\pi_d| = n$, the image partition $\pi$ has size $|\pi| = \frac{n-2k^2+k}{2}$ and vice-versa.\\\par Finally, we focus our attention on actually obtaining the bounds on the largest part and the number of parts of $\pi$ explicitly under the action of $\psi_{-}$ or $\psi_{+}$. We also show that the we can retrieve back the bound on the largest part of $\pi_d$ explicitly under the action of $\psi_{-}^{-1}$ or $\psi_{+}^{-1}$ on $\pi$. We first present a lemma which lies at the heart of obtaining the desired bounds.\\
\begin{lemma}\label{maxblock}
    The index of the last block present in the block diagram representation of the Young diagram of $\pi$ is at most $l(\pi_d)-a-1$.\\
\end{lemma}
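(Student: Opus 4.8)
The plan is to obtain the bound directly from the length of $\Delta$, using the step-by-step description of $\phi_a$ together with Remark \ref{rmk2}. First I would record that, since $\Delta=\{c_{a+1},c_{a+2},\ldots,c_{\lambda_1}\}$ and $\lambda_1=l(\pi_d)$, the length of $\Delta$ is exactly $l(\Delta)=\lambda_1-a=l(\pi_d)-a$. Writing $\Delta=\{d_1,\ldots,d_l\}$ with $l=l(\pi_d)-a$, the assertion to be proved becomes: the last present block $B_I$ in the block diagram of $\phi_a(\Delta)$ satisfies $I\le l-1$.

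Next I would track which blocks can receive cells during the construction of $\phi_a(\Delta)$. By operations (1)--(3) in the definition of $\phi_a$, the $d_1$ cells are used only to fill $B_1$, and for each $2\le i\le l$ the $d_i$ cells are used only to double cover the cells already placed in $B_{i-1}$ and then to fill (part of) $B_i$. Hence the only blocks that ever receive a cell are $B_1,B_2,\ldots,B_l$, so a priori $I\le l$.

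It then remains to rule out $I=l$. When the final term $d_l$ is processed, it is used to double cover the cells already sitting in $B_{l-1}$ and then to fill $B_l$ with whatever cells are left over; any such leftover cell in $B_l$ is singly covered the moment it is placed, and there is no later term of $\Delta$ that could double cover it. Since, by Remark \ref{rmk2}, the image partition $\phi_a(\Delta)$ has no singly covered cell, there can be no such leftover cell, that is, $B_l$ is empty. Therefore $I\le l-1=l(\pi_d)-a-1$. (If $\Delta=\varepsilon$ the block diagram is empty and $l(\pi_d)=a$, so the bound holds vacuously.)

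I expect the only point needing care to be the middle step: one must check, from the precise wording of operations (1)--(3) and from the fact that $\phi_a$ is well defined (Theorem \ref{ftbij}), that the cells accounted for by $d_i$ really do stay within $B_{i-1}$ and $B_i$ and never spill into an earlier block or into a later one. Once that is in place the conclusion is immediate; in fact one gets $I=l-1$ exactly whenever $\Delta\neq\varepsilon$, and the weaker inequality stated in the lemma is all that the subsequent bounds on $l(\pi)$ and $\#(\pi)$ will require.
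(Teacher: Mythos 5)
Your argument is correct and follows essentially the same route as the paper: both identify $l(\Delta)=l(\pi_d)-a$ and then observe that, since each $d_i$ can only double cover the preceding block, the last doubly covered block has index at most $l(\Delta)-1$. Your write-up merely makes explicit (via Remark \ref{rmk2}) why $B_{l}$ itself cannot survive into the image partition, which the paper leaves implicit.
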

\begin{proof}
    In the shifted Young diagram of $\pi_d$, the length of the unimodal sequence whose alternating sum is zero is equal to $l(\pi_d)-a$. So, the number of blocks that can be \textit{doubly covered} by the elements of this sequence is at most $l(\pi_d)-a-1$.
\end{proof}\par\quad\par Now, we consider two cases according to the sign of the BG-rank $k$ of $\pi_d\in\mathcal{D}_{n,k}^{N,\nu}$.\\
\begin{itemize}
    \item \fbox{Case I: $k\le 0$}\\\par If $k\le 0$, then from Lemma (\ref{relka}), we have $a = -2k$, i.e, $a$ is even.\\\par Let $I$ be the index of the last present block in the block diagram representation of the Young diagram of $\pi$. From Lemma (\ref{maxblock}), we know that
    \begin{align*}
        I & \le l(\pi_d)-a-1\\
        & \le 2N+\nu-a-1\\
        & = 2N+\nu+2k-1\\
        & = 2(N+k)+\nu-1\\
        & = \Bigg\{\begin{array}{lr}
        2(N+k)-1\,\,\text{if}\,\,\nu = 0,\\
        2(N+k)\quad\quad\text{if}\,\,\nu = 1.\end{array}
    \end{align*} Therefore, $\#(\pi)\le N+k$.\\\par Now, let $E$ be the number of even-indexed blocks present in the block diagram representation of the Young diagram of $\pi$. Then, it is clear that $$E\le \sum\limits_{\substack{i=2 \\ 2\mid i}}^{l(\pi_d)-a-1}1.$$\par Again from the block diagram representation of the Young diagram of $\pi$, we have
    \begin{align*}
        l(\pi) & = a+1+E\\
        & \le a+1+\sum\limits_{\substack{i=2 \\ 2\mid i}}^{l(\pi_d)-a-1}1\\
        & \le a+1+\sum\limits_{\substack{i=2 \\ 2\mid i}}^{2N+\nu-a-1}1\\
        & = -2k+1+\sum\limits_{\substack{i=2 \\ 2\mid i}}^{2N+\nu+2k-1}1\\
        & = \Bigg\{\begin{array}{lr}
        -2k+1+\sum\limits_{\substack{i=2 \\ 2\mid i}}^{2N+2k-1}1\quad\,\text{if}\,\,\nu = 0,\\
        -2k+1+\sum\limits_{\substack{i=2 \\ 2\mid i}}^{2N+2k}1\quad\quad\text{if}\,\,\nu = 1\end{array}\\
        & = \Bigg\{\begin{array}{lr}
        -2k+1+N+k-1\,\,\,\,\text{if}\,\,\nu = 0,\\
        -2k+1+N+k\quad\quad\,\,\text{if}\,\,\nu = 1\end{array}\\
        & = \Bigg\{\begin{array}{lr}
        N-k\quad\quad\,\,\text{if}\,\,\nu = 0,\\
        N-k+1\,\,\,\,\text{if}\,\,\nu = 1.\end{array}
    \end{align*} Hence, $l(\pi)\le N+\nu-k$.\\\par For the reverse direction, since $k\le 0$, we know that $a = -2k$, $l(\pi)\le N+\nu-k$, and $\#(\pi)\le N+k$. Clearly, $l(\pi_d) = -2k+l(\Delta) = -2k+I+1$ where $I$ is the index of the last present block in the block diagram representation of the Young diagram of $\pi$. Now, we consider two sub-cases regarding the parity of $I$:\\
    \begin{itemize}
        \item \fbox{Sub-Case IA: $I$ is odd}\\\par Since $\#(\pi)\le N+k$, \begin{align*}
            I&\le 2(N+k)-1\\
            &= 2N+2k-1\\
            &\le 2N+\nu+2k-1\numberthis\label{eq31}
        \end{align*} where (\ref{eq31}) follows from the fact that $\nu\in\{0,1\}$.\\\par Therefore, from (\ref{eq31}), it follows that $l(\pi_d) = -2k+I+1\le 2N+\nu$.\\
        \item \fbox{Sub-Case IB: $I$ is even}\\\par Since $l(\pi)\le N+\nu-k$, \begin{align*}
            I&\le 2((N+\nu-k)-(a+1))\\
            &= 2N+2\nu-2k-2a-2\\
            &= 2N+2\nu+2k-2\\
            &= 2N+\nu+2k-1+\nu-1\\
            &\le 2N+\nu+2k-1+\nu-1+1-\nu\numberthis\label{eq32}\\
            &= 2N+\nu+2k-1\numberthis\label{eq33}
        \end{align*} where (\ref{eq32}) follows from the fact that $1-\nu\in\{0,1\}$.\\\par Therefore, from (\ref{eq33}), it follows that $l(\pi_d) = -2k+I+1\le 2N+\nu$.\\
    \end{itemize}
    \item \fbox{Case II: $k>0$}\\\par If $k>0$, then from Lemma (\ref{relka}), we have $a = 2k-1$, i.e, $a$ is odd.\\\par Let $I$ be the index of the last present block in the block diagram representation of the Young diagram of $\pi$. From Lemma (\ref{maxblock}), we know that
    \begin{align*}
        I & \le l(\pi_d)-a-1\\
        & \le 2N+\nu-a-1\\
        & = 2N+\nu-2k\\
        & = 2(N+\nu-k)-\nu\\
        & = \Bigg\{\begin{array}{lr}
        2(N+\nu-k)\quad\quad\text{if}\,\,\nu = 0,\\
        2(N+\nu-k)-1\,\,\text{if}\,\,\nu = 1.\end{array}
    \end{align*} Therefore, $\#(\pi)\le N+\nu-k$.\\\par If $E$ is the number of even-indexed blocks present in the block diagram representation of the Young diagram of $\pi$, $$E\le \sum\limits_{\substack{i=2 \\ 2\mid i}}^{l(\pi_d)-a-1}1.$$\par Again, from the block diagram representation of the Young diagram of $\pi$, we have
    \begin{align*}
        l(\pi) & = a+1+E\\
        & \le a+1+\sum\limits_{\substack{i=2 \\ 2\mid i}}^{l(\pi_d)-a-1}1\\
        & \le a+1+\sum\limits_{\substack{i=2 \\ 2\mid i}}^{2N+\nu-a-1}1\\
        & = 2k+\sum\limits_{\substack{i=2 \\ 2\mid i}}^{2N+\nu-2k}1\\
        & = \Bigg\{\begin{array}{lr}
        2k+\sum\limits_{\substack{i=2 \\ 2\mid i}}^{2N-2k}1\quad\quad\text{if}\,\,\nu = 0,\\
        2k+\sum\limits_{\substack{i=2 \\ 2\mid i}}^{2N-2k+1}1\quad\,\text{if}\,\,\nu = 1\end{array}\\
        & = \Bigg\{\begin{array}{lr}
        2k+N-k\,\,\,\,\text{if}\,\,\nu = 0,\\
        2k+N-k\,\,\,\,\text{if}\,\,\nu = 1.\end{array}
    \end{align*} Hence, $l(\pi)\le N+k$.\\\par For the reverse direction, since $k > 0$, we know that $a = 2k-1$, $l(\pi)\le N+k$, and $\#(\pi)\le N+\nu-k$. Clearly, $l(\pi_d) = 2k-1+l(\Delta) = 2k-1+I+1 = 2k+I$ where $I$ is the index of the last present block in the block diagram representation of the Young diagram of $\pi$. Now, we consider two sub-cases regarding the parity of $I$:\\
    \begin{itemize}
        \item \fbox{Sub-Case IIA: $I$ is odd}\\\par Since $\#(\pi)\le N+\nu-k$, \begin{align*}
            I&\le 2(N+\nu-k)-1\\
            &= 2N+2\nu-2k-1\\
            &= 2N+\nu-2k+\nu-1\\
            &\le 2N+\nu-2k+\nu-1+1-\nu\numberthis\label{eq34}\\
            &= 2N+\nu-2k\numberthis\label{eq35}
        \end{align*} where (\ref{eq34}) follows from the fact that $1-\nu\in\{0,1\}$.\\\par Therefore, from (\ref{eq35}), it follows that $l(\pi_d) = 2k+I\le 2N+\nu$.\\
        \item \fbox{Sub-Case IIB: $I$ is even}\\\par Since $l(\pi)\le N+k$, \begin{align*}
            I&\le 2((N+k)-(a+1))\\
            &= 2N+2k-2a-2\\
            &= 2N-2k\\
            &= 2N+\nu-2k-\nu\\
            &\le 2N+\nu-2k-\nu+\nu\numberthis\label{eq36}\\
            &= 2N+\nu-2k\numberthis\label{eq37}
        \end{align*} where (\ref{eq36}) follows from the fact that $\nu\in\{0,1\}$.\\\par Therefore, from (\ref{eq37}), it follows that $l(\pi_d) = 2k+I\le 2N+\nu$.\\
    \end{itemize}
\end{itemize}
Thus, we conclude that in the forward direction, $\#(\pi)\le N+k$, $l(\pi)\le N+\nu-k$ if $k\le 0$ and $\#(\pi)\le N+\nu-k$, $l(\pi)\le N+k$ if $k > 0$ and in the reverse direction, $l(\pi_d)\le 2N+\nu$ irrespective of the sign of $k$. This completes the proof of Theorem \ref{combbu}.
\end{proof}\par\quad\par

\section{Examples illustrating Theorem \ref{combbu}}\label{s4}
In this section, we present four different examples where we show the correspondences $\pi_d\underset{\psi_{+}^{-1}}{\stackrel{\psi_{+}}{\rightleftarrows}} (T_a,\pi)$ and $\pi_d\underset{\psi_{-}^{-1}}{\stackrel{\psi_{-}}{\rightleftarrows}} (T_a,\pi)$. Here, $\pi_d\in\mathcal{D}_{n,k}^{N,\nu}$ is a strict partition with fixed $BG$-rank $k$ and $l(\pi_d)\le 2N+\nu$, $T_a = \frac{a(a+1)}{2}$ is the triangular part where $a = a(\Delta)$ with $\Delta = \{d_1,d_2,\ldots,d_{l(\Delta)}\}\in\mathcal{S}_{a,b}$ obtained from the shifted Young diagram of $\pi_d$, $\pi\in\mathcal{P}_{\frac{n-2k^2+k}{2},N+\nu-k,N+k}$ is a partition where $l(\pi)\le N+\nu-k$, $\#(\pi)\le N+k$ if $k\le 0$, and $\pi\in\mathcal{P}_{\frac{n-2k^2+k}{2},N+k,N+\nu-k}$ is a partition where $l(\pi)\le N+k$, $\#(\pi)\le N+\nu-k$ if $k > 0$.\\\par In examples \ref{eg1}, \ref{eg2}, \ref{eg3}, and \ref{eg4}, all \textit{singly covered} (equivalent to being labeled by `$1$' or counted once) cells are colored yellow and all \textit{doubly covered} (equivalent to being labeled by `$2$' or counted twice) cells are colored green. The cells labeled $\mathcal{B}_i$ form a sub-region of the $i$th block $B_i$ and $b_i$ is the number of doubly covered cells (colored green) labeled $\mathcal{B}_i$ for $i\in\mathbb{N}$. In example \ref{eg1}, we show all the intermediate steps (denoted by arrows from left to right) for the forward map in detail. However, in examples \ref{eg2}, \ref{eg3}, and \ref{eg4}, we just portray the strict partition $\pi_d$ and the image $(T_a,\pi)$ without displaying the intermediate steps.
\begin{example}\label{eg1}\quad\\\\
Let $\pi_d = (9,7,5,4,1)\in\mathcal{D}_{26,2}^{4,1}$ so that $l(\pi_d) = 9\le 2N+\nu = 9$. Since $k = 2 > 0$, by Lemma \ref{relka}, $a = 2k-1 = 3$ which implies $T_3 = 6$ is the triangular part. From the shifted Young diagram of $\pi_d$, $c(\pi_d) = \{1,2,3,4,5,4,4,2,1\}$, and $\Delta = \{4,5,4,4,2,1\}$. $\psi_{+}(\pi_d) = (T_3,\pi)$. So, $b_1 = 4$, $b_2 = 1$, $b_3 = 3$, $b_4 = 1$, and $b_5 = 1$ which implies $\pi = (6,3,1)$. Clearly, $l(\pi) = 6 = N+k$ and $\#(\pi) = 3 = N+\nu-k$. Hence, $\pi\in\mathcal{P}_{10,6,3}$.\\\par Now, for the reverse direction, we are given $T_3 = 6$ and $\pi = (6,3,1)\in\mathcal{P}_{10,6,3}$. So, the solutions to $2k^2-k = 6$ are $k = 2$ and $k = -\frac{3}{2}$. Since $k\in\mathbb{Z}$, $k = 2 > 0$. On solving $N+k = 6$ and $N+\nu-k = 3$, we have $(N,\nu) = (4,1)$. On solving $\frac{n-6}{2} = 10$, we have $|\pi_d| = n = 26$. Now, $a = 2\cdot 2-1 = 3$ since $k = 2 > 0$ which implies $b_1 = a+1 = 4$, $b_2 = 1$, $b_3 = 3$, $b_4 = 1$, and $b_5 = 1$ following the block diagram configuration in Figure \ref{fig1}. Now, we obtain $d_1 = b_1 = 4$, $d_2 = b_1+b_2 = 5$, $d_3 = b_2+b_3 = 4$, $d_4 = b_3+b_4 = 4$, $d_5 = b_4+b_5 = 2$, and $d_6 = b_5+b_6 = 1$ since $b_6 = 0$. Thus, we obtain the sequence $\{4,5,4,4,2,1\}$ which we write column-wise and if we append columns of length $\{1,2,3\}$ to the left of the column of length $4$, we retrieve back the shifted Young diagram of the partition $\pi_d = (9,7,5,4,1)\in\mathcal{D}_{26,2}^{4,1}$.\\\quad\\\quad\\
\ytableausetup{centertableaux}
\ytableaushort
{\none,\none,\none,\none}
* {9,7,5,4,1}
$\longrightarrow$
\quad
\ydiagram{9,1+7,2+5,3+4,4+1}
$\longrightarrow$
\\\quad\\\quad\\
\ydiagram{3,1+2,2+1}
\quad,\quad
\ydiagram{6,5,4,4,1+1}
$\longrightarrow$
\quad
\ydiagram{3,1+2,2+1}
* [*(yellow)]{3,1+2,2+1}
\quad,\quad
\ytableaushort
{{\mathcal{B}_1}{\mathcal{B}_1}{\mathcal{B}_1}{\mathcal{B}_1}}
* {4}
* [*(yellow)]{4}
\quad
$\longrightarrow$
\\\quad\\\quad\\
\ydiagram{3,1+2,2+1}
* [*(yellow)]{3,1+2,2+1}
\quad,\quad
\ytableaushort
{{\mathcal{B}_1}{\mathcal{B}_1}{\mathcal{B}_1}{\mathcal{B}_1}{\mathcal{B}_2}}
* {5}
* [*(green)]{4}* [*(yellow)]{1+4}
\quad
$\longrightarrow$
\ydiagram{3,1+2,2+1}
* [*(yellow)]{3,1+2,2+1}
\quad,\quad
\ytableaushort
{{\mathcal{B}_1}{\mathcal{B}_1}{\mathcal{B}_1}{\mathcal{B}_1}{\mathcal{B}_2},{\mathcal{B}_3}{\mathcal{B}_3}{\mathcal{B}_3}}
* {5,3}
* [*(green)]{5}* [*(yellow)]{0,3}
\quad 
$\longrightarrow$
\\\quad\\\quad\\
\ydiagram{3,1+2,2+1}
* [*(yellow)]{3,1+2,2+1}
\quad,\quad
\ytableaushort
{{\mathcal{B}_1}{\mathcal{B}_1}{\mathcal{B}_1}{\mathcal{B}_1}{\mathcal{B}_2}{\mathcal{B}_4},{\mathcal{B}_3}{\mathcal{B}_3}{\mathcal{B}_3}}
* {6,3}
* [*(green)]{5}* [*(yellow)]{1+5}* [*(green)]{0,3}
\quad 
$\longrightarrow$
\ydiagram{3,1+2,2+1}
* [*(yellow)]{3,1+2,2+1}
\quad,\quad
\ytableaushort
{{\mathcal{B}_1}{\mathcal{B}_1}{\mathcal{B}_1}{\mathcal{B}_1}{\mathcal{B}_2}{\mathcal{B}_4},{\mathcal{B}_3}{\mathcal{B}_3}{\mathcal{B}_3},{\mathcal{B}_5}}
* {6,3,1}
* [*(green)]{6}* [*(green)]{0,3}* [*(yellow)]{0,0,1} 
\\\quad\\\quad\\
$\longrightarrow$
\quad
\ydiagram{3,1+2,2+1}
* [*(yellow)]{3,1+2,2+1}
\quad,\quad
\ytableaushort
{{\mathcal{B}_1}{\mathcal{B}_1}{\mathcal{B}_1}{\mathcal{B}_1}{\mathcal{B}_2}{\mathcal{B}_4},{\mathcal{B}_3}{\mathcal{B}_3}{\mathcal{B}_3},{\mathcal{B}_5}}
* {6,3,1}
* [*(green)]{6,3,1}\\\quad\\\quad\\
\end{example}

\begin{example}\label{eg2}\quad\\\\
Let $\pi_d = (12,11,6,4,2)\in\mathcal{D}_{35,-1}^{6,0}$ so that $l(\pi_d) = 12\le 2N+\nu = 12$. Since $k = -1 <= 0$, by Lemma \ref{relka}, $a = -2k = 2$ which implies $T_2 = 3$ is the triangular part. From the shifted Young diagram of $\pi_d$, $c(\pi_d) = \{1,2,3,4,5,5,4,3,2,2,2,2\}$, and $\Delta = \{3,4,5,5,4,3,2,2,2,2\}$. $\psi_{-}(\pi_d) = (T_2,\pi)$. So, $b_1 = 3$, $b_2 = 1$, $b_3 = 4$, $b_4 = 1$, $b_5 = 3$, $b_6 = 0$, $b_7 = 2$, $b_8 = 0$, and $b_9 = 2$ which implies $\pi = (5,4,3,2,2)$. Clearly, $l(\pi) = 5 < N+\nu-k = 7$ and $\#(\pi) = 5 = N+k$. Hence, $\pi\in\mathcal{P}_{16,7,5}$.\\\par Now, for the reverse direction, we are given $T_2 = 3$ and $\pi = (5,4,3,2,2)\in\mathcal{P}_{16,7,5}$. So, the solutions to $2k^2-k = 3$ are $k = -1$ and $k = \frac{3}{2}$. Since $k\in\mathbb{Z}$, $k = -1\le 0$. On solving $N+\nu-k = 7$ and $N+k = 5$, we have $(N,\nu) = (6,0)$. On solving $\frac{n-3}{2} = 16$, we have $|\pi_d| = n = 35$. Now, $a = -2\cdot (-1) = 2$ since $k = -1\le 0$ which implies $b_1 = a+1 = 3$, $b_2 = 1$, $b_3 = 4$, $b_4 = 1$, $b_5 = 3$, $b_6 = 0$, $b_7 = 2$, $b_8 = 0$, and $b_9 = 2$ following the block diagram configuration in Figure \ref{fig1}. Now, we obtain $d_1 = b_1 = 3$, $d_2 = b_1+b_2 = 4$, $d_3 = b_2+b_3 = 5$, $d_4 = b_3+b_4 = 5$, $d_5 = b_4+b_5 = 4$, $d_6 = b_5+b_6 = 3$, $d_7 = b_6+b_7 = 2$, $d_8 = b_7+b_8 = 2$, $d_9 = b_8+b_9 = 2$, and $d_{10} = b_9+b_{10} = 2$ since $b_{10} = 0$. Thus, we obtain the sequence $\{3,4,5,5,4,3,2,2,2,2\}$ which we write column-wise and if we append columns of length $\{1,2\}$ to the left of the column of length $3$, we retrieve back the shifted Young diagram of the partition $\pi_d = (12,11,6,4,2)\in\mathcal{D}_{35,-1}^{6,0}$.\\\quad\\\quad\\
\ytableausetup{centertableaux}
\ytableaushort
{\none,\none,\none,\none}
* {12,11,6,4,2}
{${}\underset{\psi_{-}^{-1}}{\stackrel{\psi_{-}}{\rightleftarrows}}{}$}
\quad
\ydiagram{2,1+1}
* [*(yellow)]{2,1+1}
\quad,\quad
\ytableaushort
{{\mathcal{B}_1}{\mathcal{B}_1}{\mathcal{B}_1}{\mathcal{B}_2}{\mathcal{B}_4},{\mathcal{B}_3}{\mathcal{B}_3}{\mathcal{B}_3}{\mathcal{B}_3},{\mathcal{B}_5}{\mathcal{B}_5}{\mathcal{B}_5},{\mathcal{B}_7}{\mathcal{B}_7},{\mathcal{B}_9}{\mathcal{B}_9}}
* {5,4,3,2,2}
* [*(green)]{5,4,3,2,2}\\\quad\\\quad\
\end{example}

\begin{example}\label{eg3}\quad\\\\
Let $\pi_d = (11,8,6,5,4,3,2,1)\in\mathcal{D}_{40,-2}^{5,1}$ so that $l(\pi_d) = 11\le 2N+\nu = 11$. Since $k = -2 <= 0$, by Lemma \ref{relka}, $a = -2k = 4$ which implies $T_4 = 10$ is the triangular part. From the shifted Young diagram of $\pi_d$, $c(\pi_d) = \{1,2,3,4,5,6,7,8,2,1,1\}$, and $\Delta = \{5,6,7,8,2,1,1\}$. $\psi_{-}(\pi_d) = (T_4,\pi)$. So, $b_1 = 5$, $b_2 = 1$, $b_3 = 6$, $b_4 = 2$, $b_5 = 0$, and $b_6 = 1$ which implies $\pi = (8,7)$. Clearly, $l(\pi) = 8 = N+\nu-k$ and $\#(\pi) = 2 < N+k = 3$. Hence, $\pi\in\mathcal{P}_{15,8,3}$.\\\par Now, for the reverse direction, we are given $T_4 = 10$ and $\pi = (8,7)\in\mathcal{P}_{15,8,3}$. So, the solutions to $2k^2-k = 10$ are $k = -2$ and $k = \frac{5}{2}$. Since $k\in\mathbb{Z}$, $k = -2\le 0$. On solving $N+\nu-k = 8$ and $N+k = 3$, we have $(N,\nu) = (5,1)$. On solving $\frac{n-10}{2} = 15$, we have $|\pi_d| = n = 40$. Now, $a = -2\cdot (-2) = 4$ since $k = -2\le 0$ which implies $b_1 = a+1 = 5$, $b_2 = 1$, $b_3 = 6$, $b_4 = 2$, $b_5 = 0$, and $b_6 = 1$ following the block diagram configuration in Figure \ref{fig1}. Now, we obtain $d_1 = b_1 = 5$, $d_2 = b_1+b_2 = 6$, $d_3 = b_2+b_3 = 7$, $d_4 = b_3+b_4 = 8$, $d_5 = b_4+b_5 = 2$, $d_6 = b_5+b_6 = 1$, and $d_7 = b_6+b_7 = 1$ since $b_7 = 0$. Thus, we obtain the sequence $\{5,6,7,8,2,1,1\}$ which we write column-wise and if we append columns of length $\{1,2,3,4\}$ to the left of the column of length $5$, we retrieve back the shifted Young diagram of the partition $\pi_d = (11,8,6,5,4,3,2,1)\in\mathcal{D}_{40,-2}^{5,1}$.\\\quad\\\quad\\
\ytableausetup{centertableaux,boxsize=1.3em}
\ytableaushort
{\none,\none,\none,\none}
* {11,8,6,5,4,3,2,1}
{${}\underset{\psi_{-}^{-1}}{\stackrel{\psi_{-}}{\rightleftarrows}}{}$}
\quad
\ydiagram{4,1+3,2+2,3+1}
* [*(yellow)]{4,1+3,2+2,3+1}
\quad,\quad
\ytableaushort
{{\mathcal{B}_1}{\mathcal{B}_1}{\mathcal{B}_1}{\mathcal{B}_1}{\mathcal{B}_1}{\mathcal{B}_2}{\mathcal{B}_4}{\mathcal{B}_6},{\mathcal{B}_3}{\mathcal{B}_3}{\mathcal{B}_3}{\mathcal{B}_3}{\mathcal{B}_3}{\mathcal{B}_3}{\mathcal{B}_4}}
* {8,7}
* [*(green)]{8,7}\\\quad\\\quad\\
\end{example}

\begin{example}\label{eg4}\quad\\\\
Let $\pi_d = (11,8,7,4,3,1)\in\mathcal{D}_{34,2}^{6,1}$ so that $l(\pi_d) = 11\le 2N+\nu = 13$. Since $k = 2 > 0$, by Lemma \ref{relka}, $a = 2k-1 = 3$ which implies $T_3 = 6$ is the triangular part. From the shifted Young diagram of $\pi_d$, $c(\pi_d) = \{1,2,3,4,5,6,5,3,3,1,1\}$, and $\Delta = \{4,5,6,5,3,3,1,1\}$. $\psi_{+}(\pi_d) = (T_3,\pi)$. So, $b_1 = 4$, $b_2 = 1$, $b_3 = 5$, $b_4 = 0$, $b_5 = 3$, $b_6 = 0$, and $b_7 = 1$ which implies $\pi = (5,5,3,1)$. Clearly, $l(\pi) = 5 < N+k = 8$ and $\#(\pi) = 4 < N+\nu-k = 5$. Hence, $\pi\in\mathcal{P}_{14,8,5}$.\\\par Now, for the reverse direction, we are given $T_3 = 6$ and $\pi = (5,5,3,1)\in\mathcal{P}_{14,8,5}$. So, the solutions to $2k^2-k = 6$ are $k = 2$ and $k = -\frac{3}{2}$. Since $k\in\mathbb{Z}$, $k = 2 > 0$. On solving $N+k = 8$ and $N+\nu-k = 5$, we have $(N,\nu) = (6,1)$. On solving $\frac{n-6}{2} = 14$, we have $|\pi_d| = n = 34$. Now, $a = 2\cdot 2-1 = 3$ since $k = 2 > 0$ which implies $b_1 = a+1 = 4$, $b_2 = 1$, $b_3 = 3$, $b_4 = 0$, $b_5 = 3$, $b_6 = 0$, and $b_7 = 1$ following the block diagram configuration in Figure \ref{fig1}. Now, we obtain $d_1 = b_1 = 4$, $d_2 = b_1+b_2 = 5$, $d_3 = b_2+b_3 = 6$, $d_4 = b_3+b_4 = 5$, $d_5 = b_4+b_5 = 3$, $d_6 = b_5+b_6 = 3$, $d_7 = b_6+b_7 = 1$, and $d_8 = b_7+b_8 = 1$ since $b_8 = 0$. Thus, we obtain the sequence $\{4,5,6,5,3,3,1,1\}$ which we write column-wise and if we append columns of length $\{1,2,3\}$ to the left of the column of length $4$, we retrieve back the shifted Young diagram of the partition $\pi_d = (11,8,7,4,3,1)\in\mathcal{D}_{34,2}^{6,1}$.\\\quad\\\quad\\
\ytableausetup{centertableaux}
\ytableaushort
{\none,\none,\none,\none}
* {11,8,7,4,3,1}
{${}\underset{\psi_{+}^{-1}}{\stackrel{\psi_{+}}{\rightleftarrows}}{}$}
\quad
\ydiagram{3,1+2,2+1}
* [*(yellow)]{3,1+2,2+1}
\quad,\quad
\ytableaushort
{{\mathcal{B}_1}{\mathcal{B}_1}{\mathcal{B}_1}{\mathcal{B}_1}{\mathcal{B}_2},{\mathcal{B}_3}{\mathcal{B}_3}{\mathcal{B}_3}{\mathcal{B}_3}{\mathcal{B}_3},{\mathcal{B}_5}{\mathcal{B}_5}{\mathcal{B}_5},{\mathcal{B}_7}}
* {5,5,3,1}
* [*(green)]{5,5,3,1}\\\quad\\
\end{example}

\section{Concluding remarks}\label{s5}
\begin{enumerate}
    \item We get the bounds on the largest part and the number of parts of the image partition $\pi$ from the $q$-binomial coefficient on the right-hand side of (\ref{eq1}). However, it will be interesting to examine the conditions on $\pi_d$ under which the bounds on both the largest part and the number of parts of the image partition $\pi$, as in the statement of Theorem \ref{combbu}, become exact equalities. One may even like to investigate conditions on $\pi_d$ under which any one of the two bounds, i.e., either the bound on the largest part or the bound on the number of parts of $\pi$ become an exact equality.\\ 
    \item It will be worth finding an exact formula (or at least the generating function) of the number of strict partitions of an integer $N$ with fixed BG-rank $k$, fixed largest part $L$, and fixed number of parts $M$.\\
    \item Let $\nu\in\{0,1\}$, $N$ be any non-negative integer and $k$ be any integer. If $\Tilde{B}_N(k,q)$ denotes the generating function for the number of partitions into parts less than or equal to $N$ with BG-rank equal to $k$, then Berkovich and Uncu \cite[Theorem $3.2$]{Ber-Unc16} showed that \begin{align*}
        \Tilde{B}_{2N+\nu}(k,q) = \dfrac{q^{2k^2-k}}{(q^2;q^2)_{N+k}(q^2;q^2)_{N+\nu-k}}.\numberthis\label{eq51}
    \end{align*} Summing over all values of $k$ in (\ref{eq1}), we get \cite[Theorem $3.3$]{Ber-Unc16}\begin{align*}
        \sum\limits_{k=-N}^{N+\nu}q^{2k^2-k}\left[\begin{matrix}2N+\nu\\N+k\end{matrix}\right]_{q^2} = (-q;q)_{2N+\nu}.\numberthis\label{eq52}
    \end{align*} Using (\ref{eq51}) and (\ref{eq52}), one then gets a proof of the following identity \cite[Corollary $3.4$]{Ber-Unc16} \begin{align*}
        \sum\limits_{k=-N}^{N+\nu}\dfrac{q^{2k^2-k}}{(q^2;q^2)_{N+k}(q^2;q^2)_{N+\nu-k}} = \dfrac{1}{(q;q)_{2N+\nu}}.\numberthis\label{eq53}
    \end{align*} It will be interesting to look at a direct combinatorial (or more specifically bijective) proof of (\ref{eq51}) and (\ref{eq53}).\\ 
\end{enumerate}

\section{Acknowledgments}
The authors would like to thank Alexander Berkovich for encouraging them to prove (\ref{eq1}) using combinatorial methods and for his very helpful comments and suggestions. The authors would also like to thank George Andrews for his kind interest and Ali Uncu for previewing a preliminary draft of this paper and for his helpful suggestions. The authors would also like to thank the referee for helpful comments and suggestions.

\bibliographystyle{amsplain}


\end{document}